\numberwithin{equation}{section}
\newtheorem{theorem}{Theorem}[section] 
\newtheorem{proposition}[theorem]{Proposition}
\newtheorem{lemma}[theorem]{Lemma} 
\theoremstyle{definition}
\newtheorem{definition}[theorem]{Definition}
\def\cc{\mathbf{c}}
\def\TT{\mathbb{T}}
\def\ZZ{\mathbb{Z}}
\newcommand{\overunder}[2]{
\!\begin{array}{c}
\scriptstyle{#1}\\[-.1in]
-\!\!\!-\!\!\!-\\[-.1in]
\scriptstyle{#2}
\end{array}
\!
}
\begin{document}

\title{Maximal green sequences of skew-symmetrizable $3\times 3$ matrices}

\author{Ahmet I. Seven}

\address{Middle East Technical University, Department of Mathematics, 06800, Ankara, Turkey}
\email{aseven@metu.edu.tr}

\thanks{The author's research was supported in part by the Turkish Research Council (TUBITAK)}


\date{July 26, 2012}



\begin{abstract}
Maximal green sequences are particular sequences of mutations of skew-symmetrizable matrices which were introduced by Keller in the context of quantum dilogarithm identities and independently by Cecotti-C\'ordova-Vafa in the context of supersymmetric gauge theory. In this paper we study maximal green sequences of skew-symmetrizable $3\times 3$ matrices. We show that such a matrix with a mutation-cyclic diagram does not have any maximal green sequences. We also obtain some properties of maximal green sequences of skew-symmetrizable matrices with mutation-acyclic diagrams.


\end{abstract}
\maketitle

\section{Introduction}
\label{sec:intro}

Maximal green sequences are particular sequences of mutations of skew-symmetrizable matrices. They were used in \cite{Ke2} to obtain quantum dilogarithm identities. Moreover, the same sequences appeared in theoretical physics where they yield the complete spectrum of a BPS particle, see \cite[Section~4.2]{CCV}. In this paper we study the maximal green sequences of skew-symmetrizable $3\times 3$ matrices. 
We show that 
those matrices with a mutation-cyclic diagram do not have any maximal green sequences. We also obtain some properties of maximal green sequences of skew-symmetrizable matrices with mutation-acyclic diagrams.

To be more specific, we need some terminology. Let us recall that a skew-symmetrizable matrix $B$ is an $n\times n$ integer matrix such that $DB$ is skew-symmetric for some diagonal matrix $D$ with positive diagonal entries. We consider pairs $(\cc, B)$, where $B$ is a skew-symmetrizable integer matrix and $\cc=(\cc_1,...,\cc_n)$ such that each $\cc_i=(c_1,...,c_n) \in \ZZ^n$ is non-zero. Motivated by the structural theory of cluster algebras, we call such a pair $(\cc, B)$ a $Y$-seed. Then, for $k = 1, \dots, n$, the \emph{$Y$-seed mutation} $\mu_k$ transforms
$(\cc, B)$ into the $Y$-seed $\mu_k(\cc, B)=(\cc', B')$ defined as follows \cite[Equation~(5.9)]{CAIV}, where we use the notation $[b]_+ = \max(b,0)$:
\begin{itemize}
\item
The entries of the exchange matrix $B'=(B'_{ij})$ are given by
\begin{equation}
\label{eq:matrix-mutation}
B'_{ij} =
\begin{cases}
-B_{ij} & \text{if $i=k$ or $j=k$;} \\[.05in]
B_{ij} + [B_{ik}]_+ [B_{kj}]_+ - [-B_{ik}]_+ [-B_{kj}]_+
 & \text{otherwise.}
\end{cases}
\end{equation}
\item
The tuple $\cc'=(\cc_1',\dots,\cc_n')$ is given by
\begin{equation}
\label{eq:y-mutation}
\cc'_i =
\begin{cases}
-\cc_{i} & \text{if $i = k$};\\[.05in]
\cc_i+[sgn(\cc_k)B_{k,i}]_+\cc_k
 & \text{if $i \neq k$}.
\end{cases}
\end{equation}
\end{itemize}
This transformation is involutive; furthermore, $B'$ is skew-symmetrizable with the same choice of $D$. We also use the notation $B' = \mu_k(B)$ (in \eqref{eq:matrix-mutation}) and call the transformation $B \mapsto B'$ the \emph{matrix mutation}. This operation is involutive, so it defines a \emph{mutation-equivalence} relation on skew-symmetrizable matrices.

We use the $Y$-seeds in association with the vertices of a regular tree.  To be more precise, let $\TT_n$ be an \emph{$n$-regular tree} whose edges are labeled by the numbers $1, \dots, n$, so that the $n$ edges emanating from each vertex receive different labels. We write $t \overunder{k}{} t'$ to indicate that vertices $t,t'\in\TT_n$ are joined by an edge labeled by~$k$.
Let us fix an initial seed at a vertex $t_0$ in $\TT^n$ and assign the (initial) $Y$-seed $(\cc_0,B_0)$, where $\cc_0$ is the tuple of standard basis. 
This defines a \emph{$Y$-seed pattern} on $\TT_n$, i.e. an assignment of a seed $(\cc_t, B_t)$ to every vertex $t \in \TT_n$, such that the seeds assigned to the endpoints of any edge $t \overunder{k}{} t'$ are obtained from each other by the seed mutation~$\mu_k$.
We write:
\begin{equation}
\label{eq:seed-labeling}
\cc_t =\cc= (\cc_{1}\,,\dots,\cc_{n})\,,\quad
B_t=B = (B_{ij})\,.
\end{equation}
We refer to $B$ as the \emph{exchange matrix} and $\cc$ as the \emph{$\cc$-vector} tuple of the $Y$-seed. It is conjectured that $\cc$-vectors have the following \emph{sign coherence property}:
\begin{equation}
\label{eq:C-sign-coherence}
\text{each vector $\cc_{j}$ has either all entries nonnegative or all entries nonpositive.}
\end{equation}
This conjectural property \eqref{eq:C-sign-coherence} has been proved in \cite{DWZ2} for the case of \emph{skew-symmetric} exchange matrices, using
\emph{quivers with potentials} and their representations.


We need a bit more terminology. The \emph{diagram} of a skew-symmetrizable $n\times n$ matrix ${B}$ is the directed graph $\Gamma ({B})$ defined as follows: the vertices of $\Gamma ({B})$ are the indices $1,2,...,n$ such that there is a directed edge from $i$ to $j$ if and only if ${B}_{j,i} > 0$, and this edge is assigned the weight $|B_{ij}B_{ji}|\,$. By a {subdiagram} of $\Gamma(B)$, we always mean a diagram obtained from $\Gamma(B)$ by taking an induced (full) directed subgraph on a subset of vertices and keeping all its edge weights the same as in $\Gamma(B)$. By a cycle in $\Gamma(B)$ we mean a subdiagram whose vertices can be labeled by elements of $\ZZ/m\ZZ$ so that the edges betweeen them are precisely $\{i,i+1\}$ for $i \in  \ZZ/m\ZZ$. Let us also note that if $B$ is skew-symmetric then it is also represented, alternatively, by a quiver whose vertices are the indices $1,2,...,n$ and there are $B_{j,i}>0$ many arrows from $i$ to $j$. This quiver uniquely determines the corresponding skew-symmetric matrix, so mutation of skew-symmetric matrices can be viewed as a "quiver mutation".
We call a diagram $\Gamma$ \emph{mutation-acyclic} if it is mutation-equivalent to an acyclic diagram (i.e. a diagram which has no oriented cycles at all); otherwise we call it \emph{mutation-cyclic}. 



Now we can recall the notion of a green sequence \cite{Ke2}:
\begin{definition}\label{def:green}
Let $B_0$ be a skew-symmetrizable $n\times n$  matrix. A \emph{green sequence for $B_0$} is a sequence $\mathbf{i} = (i_1, \ldots, i_l)$ such that, 
for any $1 \leq k \leq l$ with $(\cc,B)=\mu_{i_{k-1}} \circ \cdots \circ \mu_{i_1}(\cc_0,B_0)$, we have $\cc_{i_{k}}>0$ ; here if $k=1$, then we take $(\cc,B)=(\cc_0,B_0)$.

A green sequence $\mathbf{i} = (i_1, \ldots, i_l)$ is maximal if, for $(\cc,B)=\mu_{i_{l}} \circ \cdots \circ \mu_{i_1}(\cc_0,B_0)$, we have $\cc_k<0$ for all $k=1,...,n$.

\end{definition}

In this paper, we study the maximal green sequences in the basic case of size $3$ skew-symmetrizable matrices. Our first result is the following:

\begin{theorem}\label{th:cyclic}
Suppose that $B$ is a skew-symmetrizable $3\times 3$ matrix. Under the assumption \eqref{eq:C-sign-coherence},
if $\Gamma(B)$ is mutation-cyclic, then $B$ does not have any maximal green sequences.
\end{theorem}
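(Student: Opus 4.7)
My plan is to argue by contradiction. Suppose $B = B_0$ has a mutation-cyclic diagram yet admits a maximal green sequence $\mathbf{i} = (i_1,\ldots,i_l)$, and let $(\cc^{(k)}, B^{(k)})$ denote the $Y$-seed at the $k$-th step, so that $\cc^{(0)} = (\ee_1,\ee_2,\ee_3)$ is entirely ``green'' while, by maximality, $\cc^{(l)}$ is entirely ``red''. The sign-coherence hypothesis \eqref{eq:C-sign-coherence} ensures that every intermediate $\cc^{(k)}_j$ is unambiguously green (entrywise nonnegative) or red (entrywise nonpositive), so the proof reduces to tracking how this coloring evolves along the sequence.

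The first step is to invoke the structure theory of mutation-cyclic $3 \times 3$ diagrams: the three edge weights of a mutation-cyclic triangle satisfy a Markov-type inequality that is preserved under every mutation, and consequently each $B^{(k)}$ along our sequence is again cyclic with a rigidly determined sign pattern, while $\mu_{i_k}$ reverses the cycle and updates the weights via an explicit formula bounded below by the Markov-type inequality. The second step is to combine this rigidity with the $c$-vector rule $\cc_j \mapsto \cc_j + [B_{i_k,j}]_+\cc_{i_k}$ for $j \ne i_k$. A green mutation flips $\cc_{i_k}$ from green to red, whereas a previously red $\cc_j$ can flip back to green only if its magnitude is dominated coordinatewise by $[B_{i_k,j}]_+\cc_{i_k}$. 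The Markov-type inequality should prevent this ``red-to-green'' revival from occurring for all remaining red $c$-vectors at once.

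The heart of the argument is to package these two observations into a single monotone quantity that separates the all-green initial seed from the postulated all-red final seed. I plan to build such a quantity from the positive parts of the current $c$-vectors, weighted using the skew-symmetrizing diagonal $D$ together with the cyclic orientation of $B^{(k)}$, so that it echoes the Markov-type invariant underlying mutation-cyclicity. The main obstacle is verifying that this quantity transforms in the required way under each of the three possible green mutations at an arbitrary cyclic $B^{(k)}$; once this is done, the contradiction is immediate, because the all-green and all-red configurations force incompatible values of the invariant. A naive obstruction based simply on counting green $c$-vectors cannot suffice, since sign coherence alone would allow reds to revert to greens freely, so the invariant must genuinely exploit the mutation-cyclic weight bounds established in the first step.
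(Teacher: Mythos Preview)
Your outline has the right global shape---contradiction plus an invariant tracked through the sequence---but it has not yet found the invariant, and the scaffolding you propose (Markov-type inequalities, analysis of red-to-green flips, a merely monotone quantity built from positive parts of $c$-vectors) is more elaborate than what is needed. The paper uses none of that machinery; in particular, mutation-cyclicity is used only to guarantee that every intermediate diagram is cyclic, not for any weight inequality.

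The paper's invariant is \emph{exact}, not monotone. Set $u_0=(d_2|B_{2,3}|,\,d_3|B_{3,1}|,\,d_1|B_{1,2}|)$ in the standard basis; this is a radical vector, $Bu_0=0$ (Proposition~\ref{prop:+-}). The key computation (Lemma~\ref{lem:++}) shows that if $\Gamma(B)$ is cyclic and $\mu_k$ produces another cyclic diagram, then the coordinate vector of $u_0$ in the new $\cc$-basis is again precisely $(d_2|B'_{2,3}|,\,d_3|B'_{3,1}|,\,d_1|B'_{1,2}|)$. Since mutation-cyclicity forces every $\Gamma(B^{(k)})$ to be cyclic, the coordinates of $u_0$ with respect to each $\cc^{(k)}$ stay strictly positive throughout. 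At the final step all $\cc^{(l)}_j<0$, so $u_0$, being a positive combination of entrywise-nonpositive vectors, has all entries nonpositive---contradicting $u_0>0$. Your phrase ``weighted using $D$ together with the cyclic orientation of $B^{(k)}$'' is in the right neighbourhood, but the invariant lives on the dual side: it is the coordinate vector of a fixed $u_0$ in the moving basis $\cc^{(k)}$, not something assembled from positive parts of the $c$-vectors themselves. Once you see this, no case analysis of colour flips is required.
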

\noindent

For skew-symmetrizable matrices with mutation-acyclic diagrams, we have the following result:

\begin{theorem}\label{th:acyclic}
Suppose that $B$ is a skew-symmetrizable $3\times 3$ matrix.
Suppose also that $\Gamma(B)$ is mutation-acyclic and $\mathbf{i} = (i_1, \ldots, i_l)$ is a maximal green sequence for $B$. Let $B=B_0$ and, for $j=1,...,l$, let $B_j=\mu_{{i_j}} \circ \cdots \circ \mu_{i_1}(B)$. Then, under the assumption \eqref{eq:C-sign-coherence}, the diagram $\Gamma(B_j)$ is acyclic for some $0\leq j\leq l$ .

\end{theorem}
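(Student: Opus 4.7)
The conclusion is immediate with $j=0$ whenever $\Gamma(B)$ is already acyclic, so I may assume $\Gamma(B_0)$ is cyclic, hence a $3$-cycle. Since $B$ is mutation-acyclic, its weights fail the numerical condition characterizing mutation-cyclic $3$-cycles; in particular, mutation at at least one vertex of $\Gamma(B_0)$ produces an acyclic diagram.

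My plan is to argue by contradiction. Suppose $\Gamma(B_j)$ is cyclic (hence a $3$-cycle) for every $j=0,\dots,l$. Using the mutation rule \eqref{eq:matrix-mutation}, I would first verify that mutating a $3$-cycle at a vertex $k$ produces either another $3$-cycle or an acyclic diagram, the latter occurring precisely when the weight on the edge of $\Gamma(B_j)$ not incident to $k$ dominates (in the appropriate skew-symmetrizable sense) the product of the two weights on the edges meeting $k$. Under our contradiction hypothesis, each $i_{j+1}$ must therefore be a ``cycle-preserving'' vertex of $\Gamma(B_j)$. I would then track the $\cc$-vectors along this sequence via \eqref{eq:y-mutation}, using sign-coherence \eqref{eq:C-sign-coherence}, and show that some $\cc$-vector necessarily remains positive throughout the sequence, contradicting the maximality of $\mathbf{i}$.

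The technical heart of the plan is this last step, which parallels the obstruction used for Theorem~\ref{th:cyclic}. The mutation-cyclic case allows one to exploit global cyclicity of the entire mutation class, whereas here I have only cyclicity of the individual diagrams along my sequence. Nevertheless, the analysis of $\cc$-vectors should remain essentially local: sign-coherence forces each $\cc$-vector to be pure, and mutation at a green vertex of a $3$-cycle updates the remaining two $\cc$-vectors in a controlled way that cannot simultaneously drive both from green to red so long as the underlying $3$-cycle persists. The main obstacle is therefore a finite but delicate case analysis, splitting on which vertex of the $3$-cycle is mutated and on the current green/red status of the three $\cc$-vectors, to produce a step-by-step invariant ensuring that the set of green indices is never exhausted under cycle-preserving green mutations. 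Once this invariant is in place, the final $Y$-seed cannot have all $\cc$-vectors red, directly contradicting that $\mathbf{i}$ is maximal and forcing $\Gamma(B_j)$ to be acyclic for some $j$.
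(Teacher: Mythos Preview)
Your overall frame---argue by contradiction, assume every $\Gamma(B_j)$ along the sequence is a $3$-cycle, and derive that the final seed cannot be all red---matches the paper's. But the technical mechanism you propose is different from the paper's, and as written it has a real gap.

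You plan a ``finite but delicate case analysis'' on the green/red pattern of the three $\cc$-vectors, hoping to show that cycle-preserving green mutations never exhaust the green indices. The problem is the critical case you do not analyze: suppose only $k$ is green and both $i,j$ are red. Mutating at $k$ sends $\cc_k\mapsto -\cc_k$ (red) and updates exactly one neighbor, say $\cc'_i=\cc_i+|B_{k,i}|\cc_k$, a sum of a negative vector and a positive vector. Sign-coherence tells you $\cc'_i$ is pure, but it does \emph{not} tell you which sign wins; that depends on the magnitudes of the $\cc$-vectors and of the matrix entries. A purely combinatorial colour-tracking argument cannot resolve this without some quantitative invariant, and you have not supplied one.

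The paper supplies exactly such an invariant. It fixes the vector $u_0$ whose coordinates in the initial basis are $(d_2|B_{2,3}|,\,d_3|B_{3,1}|,\,d_1|B_{1,2}|)$ and proves (Lemma~\ref{lem:++}, first part) that after any cycle-to-cycle mutation the coordinates of $u_0$ in the new $\cc'$-basis are again $(d_2|B'_{2,3}|,\,d_3|B'_{3,1}|,\,d_1|B'_{1,2}|)$, in particular still strictly positive. Thus along any sequence in which every $\Gamma(B_j)$ stays cyclic, $u_0$ is always a strictly positive combination of the current $\cc$-vectors; at the end of a maximal green sequence all $\cc'_j<0$, forcing $u_0\le 0$, a contradiction. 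Note that this invariant immediately yields what you wanted: a positive vector written as a positive combination of sign-coherent $\cc$-vectors forces at least one of them to be green. Your first paragraph (the numerical criterion distinguishing mutation-acyclic cycles) is correct but unnecessary---the paper's proof never uses it, and indeed the argument is word-for-word the same as for Theorem~\ref{th:cyclic} once one only assumes cyclicity along the given sequence.
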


We also have the following result, which writes the initial exchange matrix in terms of a $Y$-seed:

\begin{theorem}\label{th:cylic+acyclic}
For any skew-symmetrizable $3\times 3$ matrix $B$, let $u=u(B)=(a,b,c)$ be defined as follows: if $\Gamma(B)$ is cyclic, then $(a,b,c)=(d_2|B_{2,3}|,d_3|B_{3,1}|,d_1|B_{1,2}|)$; if $\Gamma(B)$ is acyclic, then $(a,b,c)=(\pm d_2|B_{2,3}|,\pm d_3|B_{3,1}|,\pm d_1|B_{1,2}|)$ such that the coordinates corresponding to the source and sink have the same sign and the remaining coordinate has the opposite sign. 

Suppose now that $(\cc',B')$ is a $Y$-seed with respect to the initial seed $(\cc_0,B)$ and let $u'=u(B')=(a',b',c')$. 
Then, under the assumption \eqref{eq:C-sign-coherence}, we have $a'\cc'_1+b'\cc'_2+c'\cc'_3=\pm (a,b,c)$. Furthermore, if $\Gamma(B)$ is mutation-cyclic then $a'\cc'_1+b'\cc'_2+c'\cc'_3=(a,b,c)$.

\end{theorem}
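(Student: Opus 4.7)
The plan is to proceed by induction on the length $N$ of the mutation sequence taking $(\cc_0, B)$ to $(\cc', B')$. The base case $N = 0$ is immediate: the $\cc$-vectors are the standard basis, $u(B') = u(B) = (a,b,c)$, and the sum $a'\cc'_1 + b'\cc'_2 + c'\cc'_3$ equals $(a,b,c)$, with sign $+$.

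For the inductive step, suppose $u'_1\cc'_1 + u'_2\cc'_2 + u'_3\cc'_3 = \epsilon\,(a,b,c)$ for some $\epsilon \in \{\pm 1\}$, and set $(\cc'', B'') = \mu_k(\cc', B')$ with $u'' = u(B'')$. Sign coherence \eqref{eq:C-sign-coherence} gives a well-defined $\epsilon_k := sgn(\cc'_k) \in \{\pm 1\}$, and substituting \eqref{eq:y-mutation} into $\sum_i u''_i \cc''_i$ yields
\begin{equation*}
\sum_{i=1}^{3} u''_i \cc''_i = \Big(\sum_{i \neq k} u''_i [\epsilon_k B'_{k,i}]_+ - u''_k\Big) \cc'_k + \sum_{i \neq k} u''_i \cc'_i.
\end{equation*}
Matching this against $\sigma\epsilon(a,b,c) = \sigma(u'_1\cc'_1+u'_2\cc'_2+u'_3\cc'_3)$ for some $\sigma \in \{\pm 1\}$ reduces to choosing $\sigma$ so that (a)~$u''_i = \sigma u'_i$ for all $i \neq k$, and (b)~$-u''_k + \sum_{i \neq k} u''_i [\epsilon_k B'_{k,i}]_+ = \sigma u'_k$.

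For (a), the entry $u_i$ of $u(B')$ corresponds (up to sign) to the edge of $\Gamma(B')$ opposite to vertex $i$; for $i \neq k$ this edge is incident to $k$, and \eqref{eq:matrix-mutation} preserves the absolute values of such entries, so $|u''_i| = |u'_i|$. The key sign observation is that $\mu_k$ preserves the in/out count at vertex $k$ up to swap: if $k$ has one in- and one out-edge (which happens precisely when $k$ lies in a cycle or is the middle vertex of an acyclic diagram), the same holds in $\Gamma(B'')$; if $k$ is a source, then $k$ becomes a sink in $\Gamma(B'')$, and vice versa. Combined with the definition of $u$ in each case, this gives a consistent sign pattern on the entries $u''_i$ for $i \neq k$, and one of $\sigma = \pm 1$ makes (a) hold (using the overall-sign freedom of $u(B'')$ in the acyclic case). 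For (b), $u''_k$ is (up to sign) the weight of the edge in $\Gamma(B'')$ between the two non-$k$ vertices; expressing this weight via \eqref{eq:matrix-mutation} in terms of $B'_{k,i}$ ($i \neq k$) and the old opposite-edge weight, and then using the skew-symmetrizability identity $d_i|B'_{i,j}| = d_j|B'_{j,i}|$, reduces (b) to a routine algebraic identity which one verifies case by case according to the value of $\epsilon_k$, the signs of $B'_{k,i}$ for $i \neq k$, and the diagram types of $\Gamma(B')$ and $\Gamma(B'')$.

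Finally, if $\Gamma(B)$ is mutation-cyclic, then every diagram $\Gamma(B_t)$ along the induction is cyclic (by the definition of mutation-cyclic), so both $u'$ and $u''$ have all positive entries and (a) forces $\sigma = +1$; hence $\epsilon$ remains $+1$ throughout, giving the stronger equality. The main obstacle is the sign bookkeeping in (a)--(b) for the acyclic case, especially when $\Gamma(B')$ has fewer than three edges so that some coordinates of $u'$ vanish and the sign convention is vacuous on those entries; once the vertex roles are tracked correctly, (b) becomes a direct computation with \eqref{eq:matrix-mutation}.
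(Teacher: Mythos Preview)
Your proposal is correct and follows essentially the same approach as the paper: the paper packages the inductive step into two lemmas (one for $\Gamma(B')$ cyclic, one for $\Gamma(B')$ acyclic) that carry out exactly the case-by-case verification of your conditions (a) and (b), and then simply invokes them. The only cosmetic difference is the dual phrasing---the paper tracks the coordinates of the fixed vector $u=(a,b,c)$ in the moving basis $\cc'$ (via a change-of-coordinates formula equivalent to your expansion), whereas you expand $\sum u''_i\cc''_i$ directly; these are the same computation.
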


\section{Proofs of main results}
\label{sec:proof}
The matrices in this section are skew-symmetrizable $3\times 3$ matrices. We also assume that \eqref{eq:C-sign-coherence} is satisfied. First we note the following two properties, which can be easily checked using the definitions:

\begin{proposition} 
\label{prop:coord_change} 
Suppose that $(\cc,B)$ is a $Y$-seed (with respect to an initial $Y$-seed). Suppose also that the coordinate vector of $u$ with respect to $\cc$ is $(a_1,...,a_n)$. Let $(\cc', B')=\mu_k(\cc, B)$ and $(a'_1,...,a'_n)$ be the coordinates of $u$ with respect to $\cc'$. Then $a_i=a'_i$ if $i \ne k$ and $a'_k=-a_k+\sum a_i[sgn(\cc_k)B_{k,i}]_+$, where the sum is over all $i \ne k$.


\end{proposition}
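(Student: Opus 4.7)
The plan is a direct computation: expand $u = \sum_i a_i \cc_i$ in the new basis $\cc'$ using the mutation formula \eqref{eq:y-mutation} and match coefficients with $u = \sum_i a'_i \cc'_i$. The only conceptual input beyond linear algebra is that the $\cc'$-vectors are linearly independent, which is granted by the sign coherence hypothesis \eqref{eq:C-sign-coherence}.

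First I would invert the $\cc$-vector mutation. By involutivity, $(\cc,B)=\mu_k(\cc',B')$; the $k$-th entry of \eqref{eq:y-mutation} gives $\cc_k=-\cc'_k$, and for $i\ne k$ the formula $\cc'_i=\cc_i+[sgn(\cc_k)B_{k,i}]_+\cc_k$ rearranges, after substituting $\cc_k=-\cc'_k$, to $\cc_i=\cc'_i+[sgn(\cc_k)B_{k,i}]_+\cc'_k$. Plugging these relations into $u=a_k\cc_k+\sum_{i\ne k}a_i\cc_i$ and gathering the $\cc'_k$-terms yields
\[
u=\sum_{i\ne k}a_i\cc'_i+\Bigl(-a_k+\sum_{i\ne k}a_i[sgn(\cc_k)B_{k,i}]_+\Bigr)\cc'_k.
\]
Under \eqref{eq:C-sign-coherence} the $\cc'$-vectors form a $\ZZ$-basis, so coordinates are unique. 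Comparing the displayed formula to $u=\sum_i a'_i\cc'_i$ then yields $a'_i=a_i$ for $i\ne k$ together with the stated formula for $a'_k$.

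The only bookkeeping to watch is the sign convention inside the bracket: one must confirm that $[sgn(\cc_k)B_{k,i}]_+$ is unchanged when the direction of mutation is reversed. This follows at once from $\cc'_k=-\cc_k$ and $B'_{k,i}=-B_{k,i}$, since the two sign flips inside the bracket cancel. Beyond this the proposition is essentially a linear-algebraic identity, and no genuine obstacle arises; its role in the rest of the paper is to track how the coordinates of a fixed vector evolve along a sequence of seed mutations.
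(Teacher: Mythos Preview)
Your argument is correct and is precisely the direct verification the paper intends when it says the proposition ``can be easily checked using the definitions''; no separate proof appears in the paper. One minor point: linear independence of the $\cc'$-vectors already follows from the change-of-basis matrix being triangular with diagonal entries $\pm 1$, so sign coherence is needed only to make $sgn(\cc_k)$ meaningful in the mutation rule, not for uniqueness of coordinates.
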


\begin{proposition} 
\label{prop:+-} 





Suppose that $B$ is a skew-symmetrizable $3\times 3$ matrix $B$. Let $u=u(B)=(a,b,c)$ be defined as follows: if $\Gamma(B)$ is cyclic, then $(a,b,c)=(d_2|B_{2,3}|,d_3|B_{3,1}|,d_1|B_{1,2}|)$; if $\Gamma(B)$ is acyclic, then $(a,b,c)=(\pm d_2|B_{2,3}|,\pm d_3|B_{3,1}|,\pm d_1|B_{1,2}|)$ such that the coordinates corresponding to the source and sink have the same sign and the remaining coordinate has the opposite sign. 
Then the vector $u$ is a radical vector for $B$, i.e. $Bu=0$. 


\end{proposition}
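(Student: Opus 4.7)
The plan is to reduce the identity $Bu=0$ to a one-dimensional kernel computation for the skew-symmetric matrix $DB$. Since $D$ is invertible, $Bu=0$ is equivalent to $(DB)u=0$. A short direct computation (e.g.\ a cofactor along any row) shows that the vector
\[
v:=(d_2 B_{2,3},\ -d_1 B_{1,3},\ d_1 B_{1,2})
\]
always lies in the kernel of a $3\times 3$ skew-symmetric matrix with entries $d_i B_{ij}$. Using the skew-symmetrizability identity $d_i B_{ij}=-d_j B_{ji}$, I would rewrite the middle coordinate as $d_3 B_{3,1}$, so that $v=(d_2 B_{2,3},\,d_3 B_{3,1},\,d_1 B_{1,2})$. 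It therefore suffices to show that $u=\pm v$ in every case.

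If $\Gamma(B)$ is cyclic, then the three entries $B_{2,3}$, $B_{3,1}$, $B_{1,2}$ share a common sign determined by the cycle orientation, and stripping absolute values from the definition of $u=(d_2|B_{2,3}|,d_3|B_{3,1}|,d_1|B_{1,2}|)$ gives $u=\pm v$ immediately (with the global sign matching the cycle orientation).

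If $\Gamma(B)$ is acyclic, then exactly one of $B_{2,3}$, $B_{3,1}$, $B_{1,2}$ has the sign opposite to the other two. A brief case check confirms that this distinguished entry is always the one whose index set $\{j,k\}$ misses the middle vertex; equivalently, it is the entry that appears in the coordinate of $u$ indexed by that middle vertex. The prescribed sign convention on $(a,b,c)$—same sign for the source and sink coordinates, opposite sign for the middle—is precisely what is needed so that, once absolute values are removed, $u$ agrees with $v$ up to a global sign.

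The main obstacle is the sign bookkeeping in the acyclic case, which splits into three sub-cases depending on which vertex is in the middle; but each of them reduces to the same algebraic verification, so the argument is routine once the kernel description of $v$ is in hand.
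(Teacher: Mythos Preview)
Your argument is correct. The paper does not supply a proof of this proposition; it simply states that the result ``can be easily checked using the definitions,'' so your direct verification via the kernel vector $v=(d_2 B_{2,3},\,d_3 B_{3,1},\,d_1 B_{1,2})$ of the skew-symmetric matrix $DB$, followed by the sign comparison $u=\pm v$, is precisely the kind of routine check the paper is leaving to the reader.
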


We also need the following two lemmas to prove our results:

\begin{lemma} 
\label{lem:++}

Suppose that $(\cc,B)$ is a $Y$-seed (with respect to an initial $Y$-seed) such that $\Gamma(B)$ is \emph{cyclic} and let $u$ be the vector whose coordinate vector with respect to the basis $\cc$ is $(d_2|B_{2,3}|,d_3|B_{3,1}|,d_1|B_{1,2}|)$. Let $(\cc',B')=\mu_{{k}} (\cc,B)$. Then we have the following:

If the diagram $\Gamma(B')$ is also cyclic, then the coordinate vector of $u$ with respect to the basis $\cc'$ is $(d_2|B'_{2,3}|,d_3|B'_{3,1}|,d_1|B'_{1,2}|)$.


If the diagram $\Gamma(B')$ is acyclic, then the coordinate vector of $u$ with respect to the basis $\cc'$ is obtained from  $(d_2|B'_{2,3}|,d_3|B'_{3,1}|,d_1|B'_{1,2}|)$ by multiplying the $k$-th coordinate by $-1$.
(Note that the vertex $k$ is neither a source nor a sink in $\Gamma(B')$.)

\end{lemma}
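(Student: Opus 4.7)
The plan is to apply Proposition \ref{prop:coord_change} to compute the coordinates $(a'_1, a'_2, a'_3)$ of $u$ with respect to $\cc'$ and then compare them with $(d_2|B'_{2,3}|, d_3|B'_{3,1}|, d_1|B'_{1,2}|)$. The formula defining $u$ is cyclically symmetric under the relabeling $1 \mapsto 2 \mapsto 3 \mapsto 1$, so it suffices to treat $k = 1$; the two possible orientations of the cycle of $\Gamma(B)$ are handled analogously, so I would fix the orientation $1 \to 2 \to 3 \to 1$, giving $B_{2,1}, B_{3,2}, B_{1,3}>0$.

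For the two coordinates with $i \neq 1$, Proposition \ref{prop:coord_change} yields $a'_i = a_i$. Since mutation at $1$ merely negates the entries in row and column $1$, the absolute values $|B_{3,1}|$ and $|B_{1,2}|$ are preserved, and hence $a'_2 = d_3|B'_{3,1}|$ and $a'_3 = d_1|B'_{1,2}|$. This already matches the lemma in both the cyclic and acyclic subcases, since any potential sign flip is confined to the $k$-th coordinate.

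The heart of the argument is to establish the identity $a'_1 = d_2 B'_{2,3}$, where the right-hand side is the signed entry produced by the matrix mutation formula. I would substitute the given coordinates into the expression for $a'_1$ from Proposition \ref{prop:coord_change}, use the signs $B_{1,2}<0$ and $B_{1,3}>0$ dictated by the chosen orientation, and repeatedly apply the skew-symmetrizability relations $d_i B_{i,j} = -d_j B_{j,i}$. I expect both subcases $sgn(\cc_1) = \pm 1$ to collapse to the same expression $d_2(B_{2,3} + B_{2,1} B_{1,3}) = d_2 B'_{2,3}$; in the subcase $sgn(\cc_1) = -1$ one additionally needs $d_3 B_{3,1} = -d_1 B_{1,3}$ to match. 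This sign bookkeeping is the main obstacle, but it is essentially mechanical.

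Once $a'_1 = d_2 B'_{2,3}$ is in hand, the dichotomy in the conclusion is read off from the sign of $B'_{2,3}$. After mutating at $1$ the two edges incident to $1$ reverse, producing the sub-path $2 \to 1 \to 3$ in $\Gamma(B')$, so $\Gamma(B')$ is cyclic exactly when the remaining edge runs $3 \to 2$, equivalently $B'_{2,3}>0$. Thus in the cyclic case $a'_1 = d_2 B'_{2,3} = d_2|B'_{2,3}|$, while in the acyclic case $a'_1 = d_2 B'_{2,3} = -d_2|B'_{2,3}|$, which is precisely the claimed flip of the $k$-th coordinate. The remark that $k$ is neither a source nor a sink in the acyclic $\Gamma(B')$ is automatic from the sub-path $2\to 1\to 3$.
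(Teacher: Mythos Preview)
Your proposal is correct and takes essentially the same route as the paper: both compute the new $k$-th coordinate via Proposition~\ref{prop:coord_change} and match it against the mutation formula for the entry of $B'$ opposite $k$, with the two unchanged coordinates handled trivially. The paper keeps $k$ generic, fixes the neighbor $i$ with $sgn(\cc_k)=sgn(B_{k,i})$, and then runs four cases (cyclic/acyclic for $\Gamma(B')$, each with two sign subcases); your signed identity $a'_1 = d_2 B'_{2,3}$ absorbs the cyclic/acyclic dichotomy into the sign of $B'_{2,3}$ at the end, which is a mild streamlining but not a genuinely different argument.
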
 

\begin{proof}
Assume without loss of generality that $sgn(\cc_k)=sgn(B_{k,i})$.
Then $\cc'_i=\cc_i+|B_{k,i}|\cc_k$, $\cc'_j=\cc_j$, $\cc'_k=-\cc_k$. Thus the k-th coordinate of $u$ with respect to $\cc'$ will be 
$-d_i|B_{i,j}|+|B_{k,i}|d_k|B_{k,j}|=-d_i|B_{i,j}|+|B_{i,k}|d_i|B_{k,j}|=d_i(-|B_{i,j}|+|B_{i,k}||B_{k,j}|)$ (Proposition~\ref{prop:coord_change} 
) , 
the other coordinates are the same (note that the $k$-th coordinate of $u$ with respect to $\cc$ is $d_i|B_{i,j}|=d_j|B_{j,i}|$). 
Thus, to prove the statement in the first part, it is enough to show $|B'_{i,j}|=-|B_{i,j}|+|B_{i,k}||B_{k,j}|$ (Recall that the skew-symmetrizing matrix $D$ is preserved under mutations). For convenience, we investigate in cases:

\noindent
Case 1. $B_{k,i}>0$. Then $B_{i,j}>0$ and $B_{j,k}>0$ (so $B_{i,k}<0,B_{j,i}<0,B_{k,j}<0$).
Then, since $\Gamma(B')$ is cyclic, we have $B'_{k,i}<0,B'_{i,j}<0,B'_{j,k}<0$.
By \eqref{eq:matrix-mutation}, we have
$$B'_{ij} =B_{ij}+[B_{ik}]_+ [B_{kj}]_+ - [-B_{ik}]_+ [-B_{kj}]_+=B_{ij}-[-B_{ik}][-B_{kj}]=B_{ij}-B_{ik}B_{kj}.$$
Since $B'_{i,j}<0$ by assumption in this case, we have
$|B'_{ij}|=-B'_{ij}=-B_{ij}+B_{ik}B_{kj}=-|B_{ij}|+|B_{ik}||B_{kj}|$ (note $B_{i,j}>0,B_{ik}B_{kj}>0$ by assumption) as required.

\noindent
Case 2. $B_{k,i}<0$. Then $B_{i,j}<0$ and $B_{j,k}<0$ (so $B_{i,k}>0,B_{j,i}>0,B_{k,j}>0$).
Then, since $\Gamma(B')$ is cyclic, we have $B'_{k,i}>0,B'_{i,j}>0,B'_{j,k}>0$.
By \eqref{eq:matrix-mutation}, we have
$$B'_{ij} =B_{ij}+[B_{ik}]_+ [B_{kj}]_+ - [-B_{ik}]_+ [-B_{kj}]_+=B_{ij}+B_{ik}B_{kj}.$$ 
Thus 
$|B'_{ij}|=B'_{ij} =B_{ij}+B_{ik}B_{kj}=-|B_{ij}|+|B_{ik}B_{kj}|$ as required.

For the second part, suppose $\Gamma(B')$ is acyclic. We assume, without loss of generality, that $sgn(\cc_k)=sgn(B_{k,i})$.
To prove the statement (of the second part), it is enough to show that 
$-|B'_{i,j}|=-|B_{i,j}|+|B_{i,k}||B_{k,j}|$. For convenience, we investigate in cases:

\noindent
Case 1. $B_{k,i}>0$. Then $B_{i,j}>0$ and $B_{j,k}>0$ (so $B_{i,k}<0,B_{j,i}<0,B_{k,j}<0$).
Then, since $\Gamma(B')$ is acyclic, we have $B'_{k,i}<0,B'_{j,k}<0$ but $B'_{i,j}>0$.
By \eqref{eq:matrix-mutation}, we have
$$B'_{ij} =B_{ij}+[B_{ik}]_+ [B_{kj}]_+ - [-B_{ik}]_+ [-B_{kj}]_+=B_{ij}-[-B_{ik}][-B_{kj}]=B_{ij}-B_{ik}B_{kj}.$$ 
Thus, we have $-|B'_{ij}|=-B'_{ij}=-B_{ij}+B_{ik}B_{kj}=-|B_{ij}|+|B_{ik}||B_{kj}|$ as required.

\noindent
Case 2. $B_{k,i}<0$. Then $B_{i,j}<0$ and $B_{j,k}<0$ (so $B_{i,k}>0,B_{j,i}>0,B_{k,j}>0$).
Then, since $\Gamma(B')$ is acyclic, we have $B'_{k,i}>0,B'_{j,k}>0$ but $B'_{i,j}<0$.
By \eqref{eq:matrix-mutation},
$$B'_{ij} =B_{ij}+[B_{ik}]_+ [B_{kj}]_+ - [-B_{ik}]_+ [-B_{kj}]_+=B_{ij}+B_{ik}B_{kj}.$$ Thus 
$-|B'_{ij}|=B'_{ij} =B_{ij}+B_{ik}B_{kj}=-|B_{ij}|+|B_{ik}B_{kj}|$ as required.

\end{proof}

\begin{lemma} 
\label{lem:+-}

Suppose that $(\cc,B)$ is a seed such that $\Gamma(B)$ is \emph{acyclic} and $u$ be the vector whose coordinate vector with respect to the basis $\cc$ is $(\pm d_2|B_{2,3}|,\pm d_3|B_{3,1}|,\pm d_1|B_{1,2}|)$ such that the coordinates corresponding to the source and sink have the same sign and the remaining coordinate has the opposite sign. Let $(\cc',B')=\mu_{{k}} (\cc,B)$. Then we have the following:

If $k$ is a source or sink in $\Gamma(B)$ (so the diagram $\Gamma(B')$ is also acyclic), then the coordinate vector of $u$ with with respect to the basis $\cc'$ is obtained from the one for $\cc$ by multiplying the $k$-th coordinate by  $-1$.


If $k$ is neither a source nor a sink in $\Gamma(B)$ (so the diagram $\Gamma(B')$ is cyclic), then the coordinate vector of $u$ with with respect to the basis $\cc'$ is $(d_2|B'_{2,3}|,d_3|B'_{3,1}|,d_1|B'_{1,2}|)$ or its negative.

\end{lemma}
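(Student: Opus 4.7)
The plan is to mimic the strategy used for Lemma \ref{lem:++}: use Proposition \ref{prop:coord_change} to track how the coordinates of $u$ transform under $\mu_k$, and close each case by combining the mutation formula \eqref{eq:matrix-mutation} with the skew-symmetrizability identity $d_i|B_{i,j}|=d_j|B_{j,i}|$. By Proposition \ref{prop:coord_change}, only the $k$-th coordinate changes: $a'_i=a_i$ for $i\neq k$, and $a'_k=-a_k+\sum_{i\neq k}a_i[sgn(\cc_k)B_{k,i}]_+$.

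For the first part, suppose $k$ is a source or sink. The two nonzero entries $B_{k,i}$ with $i\neq k$ then share a common sign, so the bracketed terms in the sum either all vanish (giving $a'_k=-a_k$ immediately) or all equal $|B_{k,i}|$. In the latter subcase, substituting the explicit values of $a_i$ prescribed by the source/sink/middle sign pattern and applying skew-symmetrizability makes the two surviving summands cancel. The mutation formula \eqref{eq:matrix-mutation} also gives $B'_{ij}=B_{ij}$ for all $i,j\neq k$, because one of $[B_{ik}]_+[B_{kj}]_+$ and $[-B_{ik}]_+[-B_{kj}]_+$ is automatically zero when $k$ is a source or sink. Since swapping the source/sink role of $k$ in $\Gamma(B')$ does not disturb the prescribed sign pattern on the other coordinates, only the $k$-th coordinate flips, as required.

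For the second part, where $k$ is the middle vertex, I would first verify from \eqref{eq:matrix-mutation} that $\Gamma(B')$ is a $3$-cycle; that $|B'_{k,m}|=|B_{k,m}|$ for every $m\neq k$; and that, for the unique pair $i,j\neq k$, $|B'_{i,j}|=|B_{i,j}|+|B_{i,k}||B_{k,j}|$. Then the two unchanged coordinates $a'_i=a_i$ already match the prescribed form $\epsilon d_?|B'_{?,?}|$, where $\epsilon$ is the common sign of the source and sink coordinates of $u$. For the new $k$-th coordinate, I would split on $sgn(\cc_k)$: exactly one of the two bracketed products survives, and pushing the scalars through using skew-symmetrizability turns that surviving term into $\epsilon d_?|B_{i,k}||B_{k,j}|$. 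Combined with the original middle-vertex entry $-\epsilon d_?|B_{i,j}|$, this produces $\epsilon d_?(|B_{i,j}|+|B_{i,k}||B_{k,j}|)=\epsilon d_?|B'_{?,?}|$, matching the claimed formula up to the sign $\epsilon$. The main obstacle I anticipate is bookkeeping: correctly identifying which vertex plays the source/middle/sink role in each subcase, aligning the cyclic indexing of the statement with the actual vertex labels, and keeping the signs from $sgn(\cc_k)$, the mutation formula, and the sign pattern of $u$ in step so that skew-symmetrizability produces the required cancellation.
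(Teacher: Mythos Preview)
Your plan is essentially the paper's own proof: both parts invoke Proposition~\ref{prop:coord_change} to isolate the change in the $k$-th coordinate, then use the skew-symmetrizability identity $d_i|B_{i,j}|=d_j|B_{j,i}|$ together with \eqref{eq:matrix-mutation} to close each case (the paper's key identity $a_i|B_{k,i}|=-a_j|B_{k,j}|$ in the source/sink case is exactly your ``two surviving summands cancel'', and its computation $|B'_{i,j}|=|B_{i,j}|+|B_{i,k}||B_{k,j}|$ in the middle-vertex case is exactly what you propose to verify). One small caution on the first part: mutating at a source or sink $k$ does \emph{not} leave the source/middle/sink roles of the other two vertices unchanged---they swap---so the sign pattern on the unchanged coordinates $a_i,a_j$ matches $u(B')$ only after a global sign flip $\epsilon'=-\epsilon$; this is harmless for the lemma but is precisely the bookkeeping you flagged as the main obstacle.
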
 

\begin{proof}
Let $i,j$ be the remaining vertices (so $\{i,j,k\}=\{1,2,3\}$). 
For the first part, suppose that $k$ is a source or sink, so $sgn(B_{k,i})=sgn(B_{k,j})$. 
Let us denote the $i$-th,$j$-th and $k$-th coordinates of $u$ by $a_i,a_j,a_k$ respectively, so 
$|a_i|=d_k|B_{k,j}|=d_j|B_{j,k}|$, $|a_j|=d_k|B_{k,i}|=d_i|B_{i,k}|$. Then, in particular, 

(*) $|a_i||B_{k,i}|=|a_j||B_{k,j}|$. 

Let us assume, without loss of generality, that $i$ is neither a source nor a sink. Then, by the condition on the signs, the numbers $a_i$ and $a_j$ have opposite signs, so (*) implies that 

(**) $a_i|B_{k,i}|=-a_j|B_{k,j}|$.

Let us assume first that $sgn(\cc_k)=sgn(B_{k,i})=sgn(B_{k,j})$. Then
$\cc'_i=\cc_i+|B_{k,i}|\cc_k$, $\cc'_j=\cc_j+|B_{k,j}|\cc_k$, $\cc'_k=-\cc_k$. 
Then 
the $k$-th coordinate of $u$ with respect to $\cc'$ will be $-a_k$ and the other coordinates are the same because
$u=a_k\cc_k+a_i\cc_i+a_j\cc_j=-a_k(-\cc_k)+a_i(\cc_i+|B_{k,i}|\cc_k)+a_j(\cc_j+|B_{k,j}|\cc_k)$ because $a_i|B_{k,i}|=-a_j|B_{k,j}|$ by (**).

Let us now assume that $sgn(\cc_k)=-sgn(B_{k,i})=-sgn(B_{k,j})$. Then $\cc'_k=-\cc_k$ and $\cc'_i=\cc_i$, $\cc'_j=\cc_j$. Then the $k$-th coordinate of $u$ with respect to $\cc'$ will be $-a_k$ and the other coordinates are the same.

For the second part, suppose that $k$ is neither a source nor a sink, so $sgn(B_{k,i})=-sgn(B_{k,j})$.
We may assume, without loss of generality, that $sgn(\cc_k)=sgn(B_{k,i})$. Then
$\cc'_i=\cc_i+|B_{k,i}|\cc_k$, $\cc'_j=\cc_j$, $\cc'_k=-\cc_k$. 
Let us denote the $i$-th,$j$-th and $k$-th coordinates of $u$ (with respect to $\cc$) by $a_i,a_j,a_k$ respectively, so 
$|a_i|=d_k|B_{k,j}|=d_j|B_{j,k}|$, $|a_j|=d_k|B_{k,i}|=d_i|B_{i,k}|$, $|a_k|=d_i|B_{i,j}|=d_j|B_{j,i}|$ such that 

$sgn(a_i)=sgn(a_j)=-sgn(a_k)$ (***).

\noindent
Then the $k$-th coordinate of $u$ with respect to $\cc'$ will be $a'_k=-a_k+a_i|B_{k,i}|$ and the other coordinates $a'_i,a'_j$ are the same because
$u=a_k\cc_k+a_i\cc_i+a_j\cc_j=(-a_k+a_i|B_{k,i}|)(-\cc_k)+a_i(\cc_i+|B_{k,i}|\cc_k)+a_j(\cc_j)$. 
Note that $sgn(-a_k+a_i|B_{k,i}|)=sgn(a_i)=sgn(a_j)$ by (***).
Thus we may assume, without loss of generality that, $sgn(a_i)=sgn(a_j)=+1=-sgn(a_k)$, so 
$a_i=d_k|B_{k,j}|=d_j|B_{j,k}|$, $a_j=d_k|B_{k,i}|=d_i|B_{i,k}|$, $a_k=-d_i|B_{i,j}|=-d_j|B_{j,i}|$
and show that 
$a'_k=-a_k+a_i|B_{k,i}|=d_i|B_{i,j}|+ d_k|B_{k,j}||B_{k,i}| = d_i|B_{i,j}|+ d_i|B_{k,j}||B_{i,k}| = d_i(|B_{i,j}|+ |B_{k,j}||B_{i,k}|)=
d_i|B'_{i,j}|$, i.e. show that
$|B'_{i,j}|=|B_{i,j}|+ |B_{k,j}||B_{i,k}|$. 
This will complete the proof. For convenience, we investigate in cases.


\noindent
Case 1. $B_{k,i}>0$. Then $B_{j,i}>0$ and $B_{j,k}>0$ (so $B_{i,k}<0,B_{i,j}<0,B_{k,j}<0$).
Note then that, since $\Gamma(B')$ is cyclic, we have $B'_{k,i}<0,B'_{i,j}<0$, $B'_{j,k}<0$.
By \eqref{eq:matrix-mutation},
$$B'_{ij} =B_{ij}+[B_{ik}]_+ [B_{kj}]_+ - [-B_{ik}]_+ [-B_{kj}]_+=B_{ij}-[-B_{ik}][-B_{kj}]=B_{ij}-B_{ik}B_{kj}.$$
Thus $|B'_{ij}|=-B'_{ij}=-B_{ij}+B_{ik}B_{kj}=|B_{ij}|+|B_{ik}||B_{kj}|$ as required.

\noindent
Case 2. $B_{k,i}<0$. Then $B_{j,i}<0$ and $B_{j,k}<0$ (so $B_{i,k}>0,B_{i,j}>0,B_{k,j}>0$).
Then, since $\Gamma(B')$ is cyclic, we have $B'_{k,i}>0,B'_{i,j}>0$, $B'_{j,k}>0$.
By \eqref{eq:matrix-mutation},
$$B'_{ij} =B_{ij}+[B_{ik}]_+ [B_{kj}]_+ - [-B_{ik}]_+ [-B_{kj}]_+=B_{ij}-[B_{ik}][B_{kj}]=B_{ij}+B_{ik}B_{kj},$$ so
$|B'_{ij}|=B'_{ij}=B_{ij}+B_{ik}B_{kj}=|B_{ij}|+|B_{ik}||B_{kj}|$ as required.

\end{proof}



We can now prove our results.


\noindent
Proof of Theorem~\ref{th:cyclic}: Suppose that $\mathbf{i} = (i_1, \ldots, i_l)$ is a maximal green sequence for $B$. Let $(\cc',B')=\mu_{i_{l}} \circ \cdots \circ \mu_{i_1}(\cc,B)$.
Then $\cc'_{j}<0$ for all $j=1,...,n$. Let $u_0$ be the vector (whose coordinate vector with respect to the initial basis $\cc$ is) $(d_2|B_{2,3}|,d_3|B_{3,1}|,d_1|B_{1,2}|)$. Let $(a_1,a_2,a_3)$ be the coordinates of $u_0$ with respect to $\cc'$. By Lemma~\ref{lem:++}, the coordinates $a_1,a_2,a_3>0$. This implies that, since $u_0=a_1\cc'_{1}+a_2\cc'_{2}+a_3\cc'_{3}$, the coordinates of $u_0$ with respect to $\cc$ are non-positive; which is a contradiction. 

\noindent
Proof of Theorem~\ref{th:acyclic}: Suppose that $\Gamma(B_j)$ is cyclic for all $0\leq j\leq l$. Let $(\cc',B')=\mu_{i_{l}} \circ \cdots \circ \mu_{i_1}(\cc,B)$. Then $\cc'_{j}<0$ for all $j=1,...,n$. Let $u_0$ be the vector (whose coordinate vector with respect to the initial basis $\cc$ is) $(d_2|B_{2,3}|,d_3|B_{3,1}|,d_1|B_{1,2}|)$. Let $(a_1,a_2,a_3)$ be the coordinates of $u_0$ with respect to $\cc'$. By the first part of Lemma~\ref{lem:++}, the coordinates $a_1,a_2,a_3>0$. This implies that, since $u_0=a_1\cc'_{1}+a_2\cc'_{2}+a_3\cc'_{3}$, the coordinates of $u_0$ with respect to $\cc$ are non-positive; which is a contradiction.

\noindent
Proof of Theorem~\ref{th:cylic+acyclic}: By Lemmas~\ref{lem:++} and \ref{lem:+-}, the coordinate vector of $u$ with respect to $\cc'$ is $u'$ or $-u'$.
Then the conclusions follow.

\end{document}